\documentclass[11pt,a4paper]{amsart}

\usepackage[T1]{fontenc}
\usepackage{microtype}
\usepackage[margin=32mm]{geometry}
\usepackage{amsmath,amssymb,amsthm,mathtools}
\usepackage[
  colorlinks=true,
  linkcolor=blue,
  citecolor=blue,
  urlcolor=blue
]{hyperref}
\hypersetup{
  pdftitle={A Refined Sum--Product Estimate via Higher Energies},
  pdfauthor={Kaiqiang Zhang, Yuyu Wang, Yuanguo Zeng}
}

\allowdisplaybreaks
\numberwithin{equation}{section}

\newtheorem{theorem}{Theorem}[section]
\newtheorem{proposition}[theorem]{Proposition}
\newtheorem{lemma}[theorem]{Lemma}

\theoremstyle{definition}

\theoremstyle{remark}
\newtheorem{remark}[theorem]{Remark}

\newcommand{\R}{\mathbb R}
\newcommand{\Rpos}{\mathbb R_{>0}}
\newcommand{\Rtimes}{\mathbb R^{\times}}
\newcommand{\E}{\mathrm E}
\newcommand{\ind}{\mathbf 1}

\title{A Refined Sum--Product Estimate via Higher Energies}
\author{Kaiqiang Zhang}
\author{Yuyu Wang}
\author{Yuanguo Zeng}
\date{}

\begin{document}

\begin{abstract}
Let $A\subset\R$ be a finite set. Combining the multiplicative slope estimate of
Rudnev--Stevens, Cushman's higher-energy regularization, Shakan's
$d^+$--$d^\times$ decomposition, and Solymosi's classical sum--product estimate,
we prove
\[
  |AA|^{204}|A+A|^{301}\gtrsim |A|^{675},
\]
where $\gtrsim$ suppresses a fixed polylogarithmic factor in $|A|$.
Consequently, for every $\varepsilon>0$,
\[
  \max\{|A+A|,|AA|\}
  \gg_{\varepsilon}
  |A|^{135/101-\varepsilon}.
\]
The proof is organized around two intermediate estimates. For every nonempty
finite set $B\subset\Rpos$,
\[
  d^\times(B)|BB|^{12}|B+B|^{16}\gtrsim |B|^{38},
\]
whereas for every nonempty finite set $U\subset\R$,
\[
  d^+(U)^{17}|U+U|^{29}\gtrsim |U|^{46}.
\]
\end{abstract}

\maketitle

\section{Introduction}
\label{sec:introduction}

For a finite set $A\subset\R$, write
\[
  A+A:=\{a+b:a,b\in A\},
  \qquad
  AA:=\{ab:a,b\in A\}.
\]
The sum--product problem over the reals asks for quantitative lower bounds for
$\max\{|A+A|,|AA|\}$ in terms of $|A|$. It originates in the work of
Erd\H{o}s and Szemer\'edi \cite{ErdosSzemeredi1983}. The ``few sums, many
products'' estimate of Elekes and Ruzsa \cite{ElekesRuzsa2003}, together with
Solymosi's geometric slope method \cite{Solymosi2009}, laid important foundations
for subsequent work on the real sum--product problem. In the developments that
broke the exponent $4/3$ arising from Solymosi's method, Konyagin and Shkredov
combined slope arguments, energy estimates, and tools of the small-sumset/large-product-set type \cite{KonyaginShkredov2015,KonyaginShkredov2016}; Rudnev and
Stevens later refined this framework further \cite{RudnevStevens2022}.

Higher energies, which extend the classical additive energy, were developed
systematically by Schoen and Shkredov and by Shkredov
\cite{SchoenShkredov2013,Shkredov2013Higher}. On the decomposition side, the
low-energy decomposition of Balog and Wooley \cite{BalogWooley2017} and the
higher-energy decomposition of Shakan \cite{Shakan2019} provide mechanisms for
separating additive and multiplicative structure. More recently, Cushman applied
higher-energy regularization to the real sum--product problem and proved that, for
every $\varepsilon>0$,
\[
  \max\{|A+A|,|AA|\}
  \gg_{\varepsilon}
  |A|^{\frac43+\frac{10}{4407}-\varepsilon}
\]
\cite{Cushman2025}. In 2026, Bloom, Sawin, Schildkraut, and Zhelezov disproved
the stronger near-quadratic conjecture over $\R$
\cite{BloomSawinSchildkrautZhelezov2026}. This does not, however, preclude
further improvements to the universal lower-bound exponent.

The present paper follows this line of ideas while keeping the additive and
multiplicative structural parameters separate until the final step. We combine
the multiplicative slope estimate of Rudnev--Stevens, Cushman's higher-energy
regularization, and Shakan's $d^+$--$d^\times$ decomposition to obtain the
following estimate.

\begin{theorem}
\label{thm:intro-main}
For every nonempty finite set $A\subset\R$,
\begin{equation}
  |AA|^{204}|A+A|^{301}
  \gtrsim
  |A|^{675}.
\label{eq:intro-mixed}
\end{equation}
In particular, for every $\varepsilon>0$,
\begin{equation}
  \max\{|A+A|,|AA|\}
  \gg_{\varepsilon}
  |A|^{135/101-\varepsilon}.
\label{eq:intro-epsilon}
\end{equation}
\end{theorem}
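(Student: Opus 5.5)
The plan is to derive \eqref{eq:intro-mixed} from the two intermediate estimates stated in the abstract (the bound for $d^\times(B)$ with $B\subset\Rpos$ and the bound for $d^+(U)$). They are joined by Shakan's $d^+$--$d^\times$ decomposition, and Solymosi's estimate $|AA||A+A|^2\gtrsim|A|^4$ balances the exponents. \eqref{eq:intro-epsilon} then follows by putting $M=\max\{|A+A|,|AA|\}$ into \eqref{eq:intro-mixed}: this gives $M^{505}\gtrsim|A|^{675}$, i.e.\ $M\gtrsim |A|^{135/101}$, and the polylogarithmic loss is absorbed into $|A|^{-\varepsilon}$.

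\emph{Reduction.} Remove $0$ and replace $A$ by whichever of $A\cap\Rpos$ and $-(A\cap(-\Rpos))$ is larger. This costs only constant factors, since $|A'A'|\le|AA|$ and $|A'+A'|\le|A+A|$ for subsets, and sign changes preserve both quantities. So we may assume $A\subset\Rpos$.

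\emph{Decomposition.} Apply Shakan's decomposition to write $A=B\sqcup C$. The form I expect to use controls $d^\times(B)$ by additive data and $d^+(C)$ by multiplicative data:
\[
d^\times(B)\lesssim |A+A|^2/|A|^2,\qquad d^+(C)\lesssim |AA|^2/|A|^2 .
\]
The precise exponents will be whatever the version recorded earlier in the paper gives. One of $B$, $C$ has size at least $|A|/2$. Sumsets and product sets of subsets are bounded by those of $A$.
\begin{itemize}
\item If $|B|\ge|A|/2$, the $d^\times$ estimate gives $(|A+A|^2/|A|^2)\,|AA|^{12}|A+A|^{16}\gtrsim|A|^{38}$, that is,
\[
|AA|^{12}|A+A|^{18}\gtrsim|A|^{40}.
\]
\item If $|C|\ge|A|/2$, the $d^+$ estimate gives $(|AA|^2/|A|^2)^{17}|A+A|^{29}\gtrsim|A|^{46}$, that is,
\[
|AA|^{34}|A+A|^{29}\gtrsim|A|^{80}.
\]
\end{itemize}

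\emph{Combining exponents.} In each case I would form a product of nonnegative powers of that case's inequality, Solymosi's inequality $|AA||A+A|^2\gtrsim|A|^4$, and the trivial bounds $|A+A|,|AA|\ge|A|$. The aim is to dominate $|AA|^{204}|A+A|^{301}/|A|^{675}$ in both cases. This is a small linear program, and the exponents $204,301,675$ should be exactly the point where both cases balance. I would then verify that both cases yield the same inequality.

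\emph{Main obstacle.} The main obstacle is the last step: getting the decomposition in precisely the form whose exponents make the two cases meet at $(204,301,675)$. If the rough form above does not close, I would first recheck the exact version of Shakan's decomposition. That is, I would check which of $d^+$ and $d^\times$ is bounded by $|AA|$ versus $|A+A|$, and with what powers. I would also check whether the decomposition needs a weighted threshold parameter, which would then be optimized jointly with the Solymosi interpolation rather than fixed in advance.
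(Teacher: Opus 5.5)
There is a genuine gap, and it is exactly at the step you flagged as the main obstacle. The decomposition you posit, $d^\times(B)\lesssim|A+A|^2/|A|^2$ and $d^+(C)\lesssim|AA|^2/|A|^2$, is not Shakan's theorem as used here (Theorem~\ref{thm:Shakan}). Bounds of this shape already hold for $A$ itself by a Szemer\'edi--Trotter incidence count, so they encode no trade-off between additive and multiplicative structure. They are also too weak to beat Solymosi. Take $|A+A|=|AA|=|A|^{4/3}$. There your two case inequalities $|AA|^{12}|A+A|^{18}\gtrsim|A|^{40}$ and $|AA|^{34}|A+A|^{29}\gtrsim|A|^{80}$ both hold, as do Solymosi's bound and the trivial bounds (the first case inequality and Solymosi with equality). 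But $|AA|^{204}|A+A|^{301}=|A|^{505\cdot 4/3}<|A|^{675}$ at that point. Hence no product of nonnegative powers of these inequalities can yield \eqref{eq:intro-mixed}, and the linear program you describe is infeasible in both cases.

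The missing idea is that Shakan's decomposition gives a \emph{product} bound, $d^+(X)d^\times(Y)\lesssim|A|$, with \emph{both} $|X|,|Y|\ge|A|/2$. A case split by which piece is large cannot use this. A product bound controls neither factor individually beyond the trivial $\lesssim|A|$. Instead, apply Lemma~\ref{lem:additive-control} to $X$ and Lemma~\ref{lem:multiplicative-control} to $Y$ simultaneously, using $X+X,\,Y+Y\subset A+A$ and $YY\subset AA$. Raise the multiplicative inequality to the $17$th power, so that the structural parameters appear as $\bigl(d^+(X)d^\times(Y)\bigr)^{17}\lesssim|A|^{17}$, and multiply. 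This gives $|AA|^{17\cdot12}|A+A|^{29+17\cdot16}\gtrsim|A|^{46+17\cdot38-17}$ directly. Solymosi's estimate is not used at this stage; it enters only inside Proposition~\ref{prop:RS}, and there is no parameter to optimize. Your reduction to $A\subset\Rpos$ and your deduction of \eqref{eq:intro-epsilon} from \eqref{eq:intro-mixed} are fine.
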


Since
\[
  \frac{135}{101}
  =\frac43+\frac1{303},
\]
the exponent in \eqref{eq:intro-epsilon} is strictly larger than
$\frac43+\frac{10}{4407}$. The proof of Theorem~\ref{thm:intro-main} is given in
Section~\ref{sec:main-theorem}.

The two main intermediate estimates are
\begin{equation}
  d^\times(B)|BB|^{12}|B+B|^{16}
  \gtrsim |B|^{38},
\label{eq:intro-mult-control}
\end{equation}
for every nonempty finite set $B\subset\Rpos$, and
\begin{equation}
  d^+(U)^{17}|U+U|^{29}
  \gtrsim |U|^{46},
\label{eq:intro-add-control}
\end{equation}
for every nonempty finite set $U\subset\R$. The first estimate is extracted from
the multiplicative slope argument of Rudnev--Stevens
\cite{RudnevStevens2022}; the second follows from Cushman's regularization lemma
\cite{Cushman2025}. Shakan's decomposition \cite{Shakan2019} then allows the two
estimates to be combined at the cost of a single structural loss.

\section{Preliminaries and notation}
\label{sec:preliminaries}

All sets in this paper are finite. For $X,Y\subset\R$, define
\[
  r_{X-Y}(t)
  :=\#\{(x,y)\in X\times Y:x-y=t\}.
\]
For $X,Y\subset\Rtimes$, define
\[
  r_{X/Y}(t)
  :=\#\{(x,y)\in X\times Y:x=ty\}.
\]
For $q\ge1$, set
\[
  \E_q^+(X,Y):=\sum_t r_{X-Y}(t)^q,
  \qquad
  \E_q^\times(X,Y):=\sum_t r_{X/Y}(t)^q,
\]
and abbreviate
\[
  \E_q^+(X):=\E_q^+(X,X),
  \qquad
  \E_q^\times(X):=\E_q^\times(X,X).
\]
We also write $\E^+=\E_2^+$ and $\E^\times=\E_2^\times$. These higher moments
may be viewed as extensions of the classical additive and multiplicative energies;
for systematic developments of higher-energy methods, see Schoen--Shkredov
\cite{SchoenShkredov2013} and Shkredov \cite{Shkredov2013Higher}.

Following Shakan \cite{Shakan2019}, for a nonempty set $X\subset\R$ define
\begin{equation}
  d^+(X)
  :=
  \sup_{\varnothing\ne Z\subset\R}
  \frac{\E_3^+(X,Z)}{|X||Z|^2}.
\label{eq:def-dplus}
\end{equation}
For a nonempty set $X\subset\Rtimes$, define
\begin{equation}
  d^\times(X)
  :=
  \sup_{\varnothing\ne Z\subset\Rtimes}
  \frac{\E_3^\times(X,Z)}{|X||Z|^2}.
\label{eq:def-dtimes}
\end{equation}
Whenever the corresponding quantity is defined,
\begin{equation}
  1\le d^+(X)\le |X|,
  \qquad
  1\le d^\times(X)\le |X|.
\label{eq:d-trivial}
\end{equation}

We write $X\ll Y$ if there is an absolute constant $C_0>0$ such that
$X\le C_0Y$. We write $X\lesssim Y$ if there are absolute constants
$C_0,C_1>0$ such that
\[
  X\le C_0(\log(2+N))^{C_1}Y,
\]
where $N$ denotes the cardinality of the ambient set in the statement under
consideration; the constants $C_0,C_1$ may vary from one occurrence to another.
The notation $X\gtrsim Y$ means $Y\lesssim X$. Thus fixed polylogarithmic losses
are suppressed throughout.

We shall repeatedly use the following moment interpolation inequality. Let
$(a_x)_x$ be a finite sequence of nonnegative real numbers and set
\[
  M_q:=\sum_x a_x^q.
\]
If $1\le q_0<q<q_1$ and $q=(1-\theta)q_0+\theta q_1$, then H\"older's
inequality gives
\begin{equation}
  M_q
  \le
  M_{q_0}^{1-\theta}M_{q_1}^{\theta}.
\label{eq:moment-interpolation}
\end{equation}
In particular,
\begin{align}
  M_{3/2}^{2/3}
  &\le M_1^{1/2}M_3^{1/6},
\label{eq:interp-3half-3}
\\
  M_{3/2}^{2/3}
  &\le M_1^{1/5}M_{12/7}^{7/15},
\label{eq:interp-3half-12seven}
\\
  M_2
  &\le M_{12/7}^{7/9}M_3^{2/9}.
\label{eq:interp-2}
\end{align}
These inequalities will be applied to the relevant additive representation
functions.

Finally, by Cauchy--Schwarz,
\begin{equation}
  \E^+(X)
  \ge
  \frac{|X|^4}{|X+X|},
  \qquad
  \E^\times(X)
  \ge
  \frac{|X|^4}{|XX|},
\label{eq:energy-lower}
\end{equation}
where the multiplicative inequality requires $0\notin X$. Although $\E^+(X)$ is
defined using differences, it counts the same additive quadruples as the second
moment of $r_{X+X}$. The analogous statement holds for multiplicative energy and
the product set.

\section{External inputs}
\label{sec:external-inputs}

We collect here the external results used below. The Rudnev--Stevens input is
stated in the precise dichotomous form needed for our argument. We also include
the reduction from their Proposition~1 to this form in order to make the range of
applicability explicit.

\begin{proposition}[Solymosi]
\label{prop:Solymosi}
Let $B\subset\Rpos$ be a nonempty finite set, and write
\[
  |B+B|=K|B|,
  \qquad
  |BB|=M|B|.
\]
Then Solymosi's multiplicative-energy estimate \cite{Solymosi2009} gives
\begin{equation}
  K^2M\gtrsim |B|.
\label{eq:Solymosi}
\end{equation}
\end{proposition}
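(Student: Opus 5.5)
We will derive \eqref{eq:Solymosi} from Solymosi's multiplicative-energy bound, used together with the Cauchy--Schwarz lower bound \eqref{eq:energy-lower}. The input we take from \cite{Solymosi2009} is
\[
  \E^\times(B)\lesssim |B+B|^2
\]
for finite $B\subset\Rpos$, where the implicit factor is $\log|B|$. We recall the mechanism briefly.

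For each $\lambda$ consider the line $y=\lambda x$. Let $n_\lambda:=r_{B/B}(\lambda)$ be the number of points of $B\times B$ on this line. We dyadically pigeonhole over the values of $n_\lambda$. This gives a dyadic range $[N,2N)$ and a set $\Lambda$ of slopes, all with $n_\lambda\in[N,2N)$, such that
\[
  |\Lambda|N^2\gtrsim \E^\times(B).
\]
The loss here is a $\log|B|$ factor from the $O(\log |B|)$ dyadic scales.

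Order the slopes of $\Lambda$ increasingly. Take two consecutive lines. Adding one point of $B\times B$ from the first line to one point from the second gives a point of $(B+B)\times(B+B)$. Because $B\subset\Rpos$, all these points lie in the open positive quadrant, and a vector sum of points on two rays lies strictly between those rays. Hence the sums coming from different consecutive pairs are distinct. Within one pair, the $\gtrsim N^2$ sums are distinct, since the two direction vectors are linearly independent. Summing over the $|\Lambda|-1$ consecutive pairs gives
\[
  |\Lambda|N^2\ll |B+B|^2 .
\]
The degenerate case $|\Lambda|=1$ is harmless: then $\E^\times(B)\lesssim N^2\le |B|^2\le|B+B|^2$.

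Next we combine with \eqref{eq:energy-lower}:
\[
  \frac{|B|^4}{|BB|}\le \E^\times(B)\lesssim |B+B|^2 .
\]
Substituting $|B+B|=K|B|$ and $|BB|=M|B|$ gives
\[
  |B|^3/M\lesssim K^2|B|^2,
\]
which is exactly $K^2M\gtrsim|B|$.

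The only step requiring care is the geometric distinctness argument. It depends on positivity: the sectors between consecutive rays must be disjoint, which is why we restrict to $\Rpos$. The dyadic pigeonholing only costs the suppressed logarithm. Since the statement attributes the energy bound to \cite{Solymosi2009}, it can be cited directly, and the proof reduces to the one-line algebraic combination above.
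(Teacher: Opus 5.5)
Your argument is correct and matches the paper, which gives no separate proof and simply cites Solymosi's bound $\E^\times(B)\ll |B+B|^2\log|B|$ (it later states this form explicitly as $K^2M\gg m/\log m$); combining that bound with the Cauchy--Schwarz inequality $\E^\times(B)\ge |B|^4/|BB|$ gives $K^2M\gtrsim|B|$, exactly as you do. Your sketch of Solymosi's consecutive-slopes argument is also accurate, though the citation alone suffices.
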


\begin{proposition}[Consequence of Rudnev--Stevens]
\label{prop:RS}
Let $B\subset\Rpos$ be nonempty, put $m:=|B|$, and write
\[
  |B+B|=Km,
  \qquad
  |BB|=Mm,
  \qquad
  e:=\E^\times(B).
\]
Then at least one of the following two alternatives holds.

\begin{enumerate}
\item
\begin{equation}
  K^{16}M^{12}\gtrsim m^{10}.
\label{eq:RS-easy-case}
\end{equation}

\item There exist $\tau\ge1$, a set $\mathcal L\subset B/B$, and a subset
$\mathcal L'\subset\mathcal L$ such that
\begin{equation}
  \tau\le r_{B/B}(\lambda)<2\tau
  \qquad (\lambda\in\mathcal L),
\label{eq:RS-layer}
\end{equation}
\begin{equation}
  |\mathcal L|\tau^2\le e
  \lesssim |\mathcal L|\tau^2,
\label{eq:RS-energy-layer}
\end{equation}
and $|\mathcal L'|\gg|\mathcal L|$. If
\[
  B_\lambda:=B\cap\lambda^{-1}B,
\]
then, for every $\lambda\in\mathcal L'$,
\begin{equation}
  |BB_\lambda|
  \gtrsim
  \frac{m^6}{M^4K^8|\mathcal L|^{1/2}}
  =
  \frac{m^{18}}
       {|BB|^4|B+B|^8|\mathcal L|^{1/2}}.
\label{eq:RS}
\end{equation}
\end{enumerate}
\end{proposition}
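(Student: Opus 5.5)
The plan is to derive Proposition~\ref{prop:RS} from Proposition~1 of Rudnev--Stevens \cite{RudnevStevens2022}, after a dyadic pigeonholing of the multiplicative energy. First, since $e=\E^\times(B)=\sum_{\lambda}r_{B/B}(\lambda)^2$ and $1\le r_{B/B}(\lambda)\le m$, we split $B/B$ into $O(\log m)$ dyadic classes $\{\lambda:\tau\le r_{B/B}(\lambda)<2\tau\}$ with $\tau$ a power of $2$. One class $\mathcal L$ carries at least a $1/O(\log m)$ share of $e$. This gives \eqref{eq:RS-layer} and $e\lesssim|\mathcal L|\tau^2$. For the other half of \eqref{eq:RS-energy-layer}, note that $|\mathcal L|\tau^2\le\sum_{\lambda\in\mathcal L}r_{B/B}(\lambda)^2\le e$. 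Note also that $|B_\lambda|=r_{B/B}(\lambda)\in[\tau,2\tau)$ for $\lambda\in\mathcal L$. By \eqref{eq:energy-lower}, $e\ge m^4/|BB|=m^3/M$, and hence $|\mathcal L|\tau^2\gtrsim m^3/M$.

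Second, we feed this popular layer into the Rudnev--Stevens slope argument. Their Proposition~1 does the following: it orders the slopes $\lambda\in\mathcal L$, considers neighbouring pairs $\lambda<\lambda'$, and bounds the sums $(B_\lambda\times\lambda B_\lambda)+(B_{\lambda'}\times\lambda'B_{\lambda'})$ from above by $|B+B|^2$-type quantities. The third-order ingredient is a bound for $|BB_\lambda|$ via the Szemerédi--Trotter-type estimate of Rudnev--Stevens. Their output is that, for a positive proportion $\mathcal L'$ of $\lambda\in\mathcal L$, either $|BB_\lambda|$ is large or a global inequality among $m,K,M,|\mathcal L|,\tau$ holds. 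I would restate their conclusion in the normalization $|B+B|=Km$, $|BB|=Mm$. I would then check that the "large $|BB_\lambda|$" branch reads exactly $|BB_\lambda|\gtrsim m^6/(M^4K^8|\mathcal L|^{1/2})$, which gives \eqref{eq:RS}. The second expression in \eqref{eq:RS} is pure algebra: $M^4K^8=|BB|^4|B+B|^8/m^{12}$.

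Third, we must show the complementary branch is \eqref{eq:RS-easy-case}. That branch involves $|\mathcal L|$ and $\tau$. We eliminate them using $|\mathcal L|\tau^2\gtrsim m^3/M$, $\tau\le m$ and $|\mathcal L|\le|BB|/1\cdot$ (trivially $|\mathcal L|\le|B/B|$). Where needed, we also use Solymosi's bound \eqref{eq:Solymosi}, $K^2M\gtrsim m$, to absorb leftover factors into the exponents $K^{16}M^{12}\gtrsim m^{10}$. In other words, if the failing-case inequality holds for the extremal values of $(\tau,|\mathcal L|)$, monotonicity gives \eqref{eq:RS-easy-case}.

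The main obstacle is this bookkeeping: matching the precise exponents of Rudnev--Stevens' Proposition~1, which is stated with different parameters (often $\tau$ and the energy rather than $|\mathcal L|$), to the form here. In particular, we need to check that removing $\tau$ costs nothing worse than $K^{16}M^{12}$ versus $m^{10}$. I would first try substituting $\tau^2\approx e/|\mathcal L|$ and the lower bound on $e$. If a residual power of $\tau$ remains with the wrong sign, I would instead pigeonhole only over layers with $\tau\gtrsim m/M$, which contain most of the energy. I would then treat the remaining low-$\tau$ range as part of alternative~(1).
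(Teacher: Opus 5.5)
\textbf{Gap: how you invoke Rudnev--Stevens.} You treat their Proposition~1 as a dichotomy: either the lower bound for $|BB_\lambda|$ holds, or some unspecified ``global inequality'' in $m,K,M,|\mathcal L|,\tau$ holds. Your third step then asserts, with a hedge about residual powers of $\tau$, that this second branch implies \eqref{eq:RS-easy-case}. Since you never say what that branch is, the step cannot be checked. It is also not how the reduction works. In the paper's proof, Proposition~1 of Rudnev--Stevens gives \eqref{eq:RS} unconditionally for at least $|\mathcal L|/8$ of the slopes $\lambda\in\mathcal L$. It does so only once three nondegeneracy hypotheses on the chosen layer hold:
\begin{enumerate}
\item $\tau$ exceeds a large absolute constant $C_{\mathrm{RS}}$;
\item $|\mathcal L|\ge m^{1/2}$;
\item the bunch parameter $N_*=\lceil C_*K^2Mm^{-1}\log m\rceil$ satisfies $2<N_*<m^{1/2}\le|\mathcal L|$.
\end{enumerate}
Alternative~(1) is exactly what you obtain when one of these hypotheses fails; it is not a second output of their proposition. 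The missing work is to isolate these conditions and dispose of each failure.

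Your toolkit is the right one for this; you just have to aim it at these cases.
\begin{itemize}
\item If $\tau\le C_{\mathrm{RS}}$, then $e\lesssim|\mathcal L|\le|B/B|\le m^2$. Combined with $e\ge m^3/M$ this forces $M\gtrsim m$, so $K^{16}M^{12}\gtrsim m^{12}$.
\item If $|\mathcal L|<m^{1/2}$, then $\tau\le m$ gives $e\lesssim m^{5/2}$, hence $M\gtrsim m^{1/2}$. Solymosi then gives $K^{16}M^{12}=(K^2M)^8M^4\gtrsim m^{10}$.
\item If $N_*\ge m^{1/2}$, then $K^2M\gg m^{3/2}/\log m$, which again yields $K^{16}M^{12}\gtrsim m^{12}$. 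The condition $N_*>2$ holds automatically once $C_*$ is large, by the sharp form $K^2M\gg m/\log m$ of Solymosi's bound.
\end{itemize}
The bunch-size condition in particular would not surface from your plan to ``eliminate $\tau$ and $|\mathcal L|$ by monotonicity'', because it lives in the hypotheses of Proposition~1, not in its conclusion. Once these three checks are made, you need neither the substitution $\tau^2\approx e/|\mathcal L|$ nor a restriction to layers with $\tau\gtrsim m/M$. The dyadic layer from your first step (which matches the paper's) is fed directly into Proposition~1.
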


\begin{proof}
If $m$ is smaller than a sufficiently large absolute constant, the assertion is
absorbed by adjusting the implicit constants. Hence assume that $m$ is large.
For any fixed $b_0\in B$, we have $b_0+B\subset B+B$ and $b_0B\subset BB$, so
$K,M\ge1$.

Decompose $B/B$ dyadically according to the size of $r_{B/B}$ and choose a layer
making a maximal contribution to $e=\E^\times(B)$. Thus there exist a dyadic
scale $\tau\ge1$ and a set $\mathcal L\subset B/B$ such that
\[
  \tau\le r_{B/B}(\lambda)<2\tau
  \qquad(\lambda\in\mathcal L),
\]
and
\[
  |\mathcal L|\tau^2\le e
  \ll (\log m)|\mathcal L|\tau^2.
\]
This is exactly \eqref{eq:RS-layer}--\eqref{eq:RS-energy-layer}.

We now check the nondegeneracy reductions used in the proof of Proposition~1 of
Rudnev--Stevens \cite[Proposition~1]{RudnevStevens2022}. Fix an absolute
constant $C_{\mathrm{RS}}>0$, sufficiently large for the multiplicity-scale
requirement in that proof. If $\tau\le C_{\mathrm{RS}}$, then
$|\mathcal L|\le|B/B|\le m^2$ gives $e\lesssim m^2$. On the other hand,
Cauchy--Schwarz yields
\[
  e\ge \frac{m^4}{|BB|}=\frac{m^3}{M}.
\]
Hence $M\gtrsim m$. Since $K\ge1$,
\[
  K^{16}M^{12}\gtrsim m^{12},
\]
so \eqref{eq:RS-easy-case} holds. We may therefore assume
$\tau>C_{\mathrm{RS}}$.

If $|\mathcal L|<m^{1/2}$, then $\tau\le m$ and
\eqref{eq:RS-energy-layer} imply
\[
  e\lesssim m^{5/2}.
\]
Combining this again with $e\ge m^3/M$ gives $M\gtrsim m^{1/2}$. Together with
\eqref{eq:Solymosi},
\[
  K^{16}M^{12}
  =(K^2M)^8M^4
  \gtrsim m^{10},
\]
so the first alternative still holds. We may therefore also assume
$|\mathcal L|\ge m^{1/2}$.

It remains to check the bunch parameter in the Rudnev--Stevens proof. They take
a sufficiently large absolute constant $C_*>128$ and use the integer
\[
  N_*:=\left\lceil C_*K^2Mm^{-1}\log m\right\rceil.
\]
Solymosi's original energy bound in fact gives
\[
  K^2M\gg \frac{m}{\log m}.
\]
Thus, if $C_*$ is chosen sufficiently large in advance, then $N_*>2$. If
$N_*\ge m^{1/2}$, then, for sufficiently large $m$,
\[
  C_*K^2Mm^{-1}\log m
  \ge m^{1/2}-1
  \ge \frac12m^{1/2},
\]
and hence
\[
  K^2M\gg \frac{m^{3/2}}{\log m}.
\]
Since $M\ge1$,
\[
  K^{16}M^{12}
  =(K^2M)^8M^4
  \gtrsim m^{12},
\]
which again places us in \eqref{eq:RS-easy-case}.

Consequently, if the first alternative fails, then necessarily
\[
  \tau>C_{\mathrm{RS}},
  \qquad
  |\mathcal L|\ge m^{1/2},
  \qquad
  2<N_*<m^{1/2}\le|\mathcal L|.
\]
These are precisely the nondegeneracy reductions made before Proposition~1 and at
the beginning of its proof in Rudnev--Stevens. Hence that proposition applies to
the present energy-rich layer $\mathcal L$. By
\cite[Proposition~1]{RudnevStevens2022}, there is a subset
$\mathcal L'\subset\mathcal L$ with
\[
  |\mathcal L'|\ge\frac18|\mathcal L|
\]
such that, for every $\lambda\in\mathcal L'$,
\[
  |BB_\lambda|
  \gg
  \frac{m^6}
       {M^4K^8|\mathcal L|^{1/2}(\log m)^7}.
\]
Absorbing the fixed power of $\log m$ into $\gtrsim$ gives \eqref{eq:RS}, and
also $|\mathcal L'|\gg|\mathcal L|$. This proves the dichotomy.
\end{proof}

\begin{proposition}[Cushman regularization]
\label{prop:Cushman}
There is an absolute constant $N_0$ with the following property. Let
$U\subset\R$ be finite with $N:=|U|\ge N_0$. Then there exists $B\subset U$ with
$b:=|B|\ge N/2$. Define
\begin{equation}
  \mathcal S
  :=
  \left\{
    y\in B+B:
    r_{B+B}(y)
    \ge
    \frac{b^2}{8|B+B|\log N}
  \right\},
\label{eq:popular-sums}
\end{equation}
and
\begin{equation}
  R
  :=
  \left\{
    x\in B:
    |(B+x)\cap\mathcal S|
    \ge \frac34 b
  \right\}.
\label{eq:rich-set}
\end{equation}
If
\[
  F:=\E_{12/7}^+(B),
\]
then there exist $\Delta\ge1$ and a nonempty set
\begin{equation}
  P_\Delta
  :=
  \{t:\Delta\le r_{R-R}(t)<2\Delta\}
\label{eq:PDelta}
\end{equation}
such that
\begin{equation}
  F\lesssim \Delta^{12/7}|P_\Delta|
\label{eq:regularisation}
\end{equation}
and
\begin{equation}
  \Delta^2|P_\Delta|^2b^2
  \ll
  \E_3^+(B)
  \sum_{p\in P_\Delta}r_{\mathcal S-\mathcal S}(p).
\label{eq:projection}
\end{equation}
\end{proposition}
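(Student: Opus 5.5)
We plan to reconstruct the regularization in three stages: pass to a large subset $B$ on which popular sums dominate, restrict to the rich set $R$, and then pigeonhole dyadically in $r_{R-R}$.

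\emph{Stage 1: choice of $B$.} We take $B:=U$ when this works, and otherwise iterate the following popularity step.
Let $\mathcal S$ be as in \eqref{eq:popular-sums}. The number of pairs $(x,y)\in B\times B$ with $x+y\notin\mathcal S$ is at most $|B+B|\cdot b^2/(8|B+B|\log N)=b^2/(8\log N)$. By Markov's inequality, at most $b/(2\log N)$ elements $x\in B$ satisfy $|(B+x)\setminus\mathcal S|>b/4$. Hence $|R|\ge b(1-1/(2\log N))$.

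If the statement is needed with $R$ comparable to $B$ only, this already suffices. If instead one needs $F$ to be carried by $R$, we compare $\E^+_{12/7}(R)$ with $\E^+_{12/7}(B)$. When $\E^+_{12/7}(R)\ll F$ fails to hold up to constants, we replace $B$ by $R$ and repeat. Each step loses a factor $1-O(1/\log N)$ in size, so after $O(\log N)$ steps we still have $b\ge N/2$, provided $N\ge N_0$. This is where the $\log N$ in the threshold of $\mathcal S$ is used.

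The iteration must terminate. Because $F$ cannot drop by a constant factor more than $O(\log N)$ times, we stop at a $B$ with $\E^+_{12/7}(R)\gg F$, up to a factor $1-O(1/\log N)$ per step. I expect this bookkeeping, namely arranging that $\E^+_{12/7}(R)\gtrsim \E^+_{12/7}(B)$ while keeping $b\ge N/2$, to be the main obstacle. The fallback is a different argument: since $r_{B-B}-r_{R-R}$ is supported on pairs involving $B\setminus R$, its contribution is small by Hölder's inequality once $|B\setminus R|\le b/\log N$.

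\emph{Stage 2: dyadic pigeonholing.} We decompose $R-R$ dyadically by the value of $r_{R-R}$. There are $O(\log N)$ dyadic layers, so some $\Delta$ satisfies
\[
\E^+_{12/7}(R)\lesssim \Delta^{12/7}|P_\Delta|.
\]
Combined with Stage 1, this gives \eqref{eq:regularisation}.

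\emph{Stage 3: the projection bound \eqref{eq:projection}.} Fix $p\in P_\Delta$ and a pair $(x,x')\in R^2$ with $x-x'=p$. For each $y\in B$ with $x+y\in\mathcal S$ and $x'+y\in\mathcal S$, we have $p=(x+y)-(x'+y)\in\mathcal S-\mathcal S$. Since $x,x'\in R$, at least $b/2$ such $y$ exist. This yields at least $\Delta b/2$ triples $(x,x',y)$ for each $p$, and each triple maps to a pair $(s,s')=(x+y,x'+y)\in\mathcal S^2$ with $s-s'=p$.

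The fibre over a fixed pair $(s,s')$ is the set of $y$ with $s-y,\,s'-y\in B$. Denoting its size by $n_p(s,s')$, summing over $p\in P_\Delta$ and applying Cauchy--Schwarz gives
\[
\Delta|P_\Delta| b\ll \sum_{p\in P_\Delta}\sum_{s-s'=p} n_p(s,s')\le\Big(\sum_{p\in P_\Delta}r_{\mathcal S-\mathcal S}(p)\Big)^{1/2}\Big(\sum_{p}\sum_{s-s'=p}n_p(s,s')^2\Big)^{1/2}.
\]
The last sum counts pairs $y,y'$ together with the elements $s-y,\,s-y',\,s'-y,\,s'-y'$ of $B$. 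This is $\sum_{d}r_{B-B}(d)^2\cdot(\text{count})$, and I would bound it by $\E_3^+(B)$ via the standard identity $\sum_{d}\sum_{p}r_{B-B}(d)\,r_{B_d-B_d}(p)$-type counting with $|B\cap(B+d)|=r_{B-B}(d)$, which gives $\sum_d r_{B-B}(d)^3=\E^+_3(B)$. Squaring the displayed inequality then gives \eqref{eq:projection}.
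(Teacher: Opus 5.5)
Your proposal is correct and reconstructs the argument that the paper only cites from Cushman's Lemma~1.7: iterate $B\mapsto R$ until $R$ retains a constant fraction of $\E_{12/7}^+(B)$, pigeonhole dyadically in $r_{R-R}$, and obtain \eqref{eq:projection} (with constant $4$) by Cauchy--Schwarz over the map $(x,x',y)\mapsto(x+y,x'+y)$, whose fibres are counted by $\sum_d r_{B-B}(d)^3=\E_3^+(B)$. The bookkeeping you flagged closes once the stopping rule is stated the right way round (your phrasing is inverted, since $\E_{12/7}^+(R)\le F$ always holds; you should continue only while $\E_{12/7}^+(R)<F/C$, with $C$ a large absolute constant): because $b^2\le F\le b^{19/7}$, there are at most $\log_C(4N^{5/7})$ iterations, each keeping a $(1-\frac1{2\log N})$-fraction of $B$, so $b\ge N/2$ survives once $C$ is large, whereas your H\"older fallback should be discarded, since the $12/7$-energy can be concentrated on the $O(b/\log N)$ removed elements.
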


\begin{proof}[Justification of the formulation]
This is a direct consequence of \cite[Lemma~1.7]{Cushman2025} in the form used
below. Cushman's lemma first chooses $B$ so that, up to at most a logarithmic
loss, the $12/7$-energy of the rich set $R$ controls $\E_{12/7}^+(B)$. A dyadic
decomposition then selects $P_\Delta$ so that
$\Delta^{12/7}|P_\Delta|$ captures this energy. All such logarithmic losses are
absorbed into $\lesssim$. The counting inequality in \eqref{eq:projection} is the
corresponding projection estimate from that lemma, with the number of solutions
written as $\sum_{p\in P_\Delta}r_{\mathcal S-\mathcal S}(p)$.
\end{proof}

\begin{theorem}[Shakan]
\label{thm:Shakan}
Let $A\subset\Rtimes$ be a nonempty finite set. Then there exist $X,Y\subset A$
such that
\begin{equation}
  X\cup Y=A,
  \qquad
  |X|,|Y|\ge \frac{|A|}{2},
  \qquad
  d^+(X)d^\times(Y)\lesssim |A|.
\label{eq:Shakan}
\end{equation}
\end{theorem}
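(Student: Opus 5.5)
The plan is to follow Shakan's iterative extraction, which plays multiplicative richness off against additive structure via the Szemerédi--Trotter theorem. The basic dichotomy lemma I would aim for is this: \emph{if $W\subset\Rtimes$ is finite and $d^\times(W)> D$, then there is $W'\subset W$ with $|W'|\gtrsim |W|/D$ (or a comparable proportion) such that $d^+(W')\lesssim |A|/D$.} Here $D$ is a threshold parameter, eventually of size about $|A|^{1/2}$ up to adjusting between the two factors.

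To prove the lemma, I would start from a witness set $Z$ with $\E_3^\times(W,Z)\ge D|W||Z|^2$ and pass to a dyadic layer. This gives a set $Q$ of ratios $\lambda$ with $r_{W/Z}(\lambda)\sim\Delta$ that carries the third moment up to a logarithm. I would then take for $W'$ the elements of $W$ lying on many of the corresponding rich lines $\{(z,\lambda z)\}$; by popularity this set has the stated size.

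To bound $d^+(W')$, I would fix an arbitrary $Z'$ and bound $\E_3^+(W',Z')$ by counting points $(w,w-z')$ against the family of lines through the origin with slopes $\lambda\in Q$, together with their translates. The point is that a large additive third moment of $W'$ against $Z'$ would give too many point--line incidences in a Szemerédi--Trotter configuration built from $Q\times Z'$. Each $w\in W'$ is $\Delta$-rich multiplicatively, and this multiplies the count. Balancing should yield $\E_3^+(W',Z')\lesssim (|A|/D)|W'||Z'|^2$. This incidence bookkeeping, in particular making the bound uniform over all $Z'$ (since $d^+$ is a supremum), is the step I expect to be the main obstacle. I would first try to reduce to the case where $r_{W'-Z'}$ is dyadically regular, and then apply Szemerédi--Trotter to the $\Theta(\Delta)$-rich lines.

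Given the lemma, I would run the following iteration. Set $Y_0=A$. While $d^\times(Y_i)>D$ and $|A\setminus Y_i|<|A|/2$, extract $W'_i\subset Y_i$ as in the lemma and set $Y_{i+1}=Y_i\setminus W'_i$. Put $X=\bigcup_i W'_i$. To control $d^+(X)$, I would use the triangle inequality for $\ell^3$ norms, $\E_3^+(X,Z)^{1/3}\le\sum_i\E_3^+(W'_i,Z)^{1/3}$. I would group the pieces dyadically by size, so that the number of steps, which is at most $|A|$ divided by the minimal piece size, costs only a logarithmic factor after Hölder. This is where $\lesssim$ absorbs the losses; if the number of pieces is too large, I would instead extract maximally, iterating the lemma inside a fixed scale.

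At termination, either $d^\times(Y)\le D$, or $X$ already has at least half of $A$. In the latter case I would stop just past $|A|/2$ and take $Y$ to be the remainder together with enough elements of $X$ to reach size $|A|/2$; in the former case I would enlarge $X$ similarly by elements of $Y$. For this padding I would use that $\E_3$ is monotone in the first set while $|Y|$ changes by at most a factor of $2$, so $d^\times$ and $d^+$ grow by at most an absolute constant. Then $X\cup Y=A$, both sets have size at least $|A|/2$, and choosing $D$ to balance the two bounds gives $d^+(X)d^\times(Y)\lesssim (|A|/D)\cdot D=|A|$, which is \eqref{eq:Shakan}.
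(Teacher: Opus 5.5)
The paper does not prove this statement; it only cites Shakan's Theorem~1.10. Your reconstruction therefore has to stand on its own, and it has two genuine gaps.

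\textbf{The extraction lemma cannot be glued.} As you state it, the lemma is vacuous. Since $D<d^\times(W)\le|W|$, any $W'\subset W$ with $|W'|=\lceil|W|/D\rceil$ already satisfies $d^+(W')\le|W'|\lesssim|A|/D$ by \eqref{eq:d-trivial}, without using the hypothesis at all. A piecewise bound $d^+(W_i')\le d$ yields, via the $\ell^3$ triangle inequality, only
\[
  d^+\Bigl(\bigcup_i W_i'\Bigr)\le d\,\frac{\bigl(\sum_i|W_i'|^{1/3}\bigr)^3}{\sum_i|W_i'|}.
\]
For $k$ pieces of equal size this is $k^2d$, and pieces of size about $|A|/D$ require $k\approx D$ of them. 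Dyadic grouping by size, or ``maximal'' extraction, does not help, because the loss comes from many comparable pieces.

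The loss is real. $\{1,\dots,n\}$ is a union of $D$ intervals, each with $d^+\le n/D$. Yet every $X\subset\{1,\dots,n\}$ with $|X|\ge n/2$ satisfies, by H\"older,
\[
  d^+(X)\ge\frac{\E_3^+(X,\{1,\dots,n\})}{|X|n^2}\ge\frac{|X|^2}{4n}\ge\frac{n}{16}.
\]

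What you need is a bound that is cubic in the piece:
\[
  \E_3^+(W',Z')\lesssim\frac{|W'|^3|Z'|^2}{d^\times(W)\,|W|}\qquad\text{for every }Z'.
\]
With this, the triangle inequality sums to $\sum_i|W_i'|=|X|$ with no loss, however many pieces there are. Your setup delivers it if the popularity step is done by pigeonholing:
\begin{enumerate}
\item Start from $|Q|\Delta^3\gtrsim d^\times(W)|W||Z|^2$, put $c_w:=\#\{(\lambda,z)\in Q\times Z:w=\lambda z\}$, and take a dyadic level $W'=\{w:C\le c_w<2C\}$ with $|W'|C\gtrsim|Q|\Delta$.
\item Apply Szemer\'edi--Trotter to the points $Z\times T$ and the lines $y=\lambda x-z'$, $(\lambda,z')\in Q\times Z'$. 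This gives $\E_3^+(W',Z')\lesssim|Q|^2|Z|^2|Z'|^2/C^3$ uniformly in $Z'$. The trivial terms are absorbed using $C|W'|\le|Q||Z|$ and $C\le\min\{|Q|,|Z|\}$, swapping the roles of $Q$ and $Z$ if necessary.
\item Inserting $C\gtrsim|Q|\Delta/|W'|$ gives the display. No lower bound on $|W'|$ is needed.
\end{enumerate}

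\textbf{The padding runs monotonicity in the wrong direction.} If $X\subset X'$ and $|X'|\le2|X|$, monotonicity of $\E_3^+$ (or $\E_3^\times$) in the first variable gives $d(X)\le2d(X')$. So a large subset is controlled by its superset, never conversely. Enlarging a set can raise $d^+$ or $d^\times$ polynomially: adjoin an arithmetic, respectively geometric, progression. Your padding of $Y$ uses exactly the extracted, multiplicatively rich elements. In the other exit case you have no upper bound on $d^\times$ of the remainder at all.

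With a fixed threshold this cannot be repaired. Take $A=\{1,\dots,n\}$. Each $a\in A$ equals $q-r$ with $q\in\{1,\dots,2n\}$, $r\in\{1,\dots,n\}$ in $n$ ways, so the same incidence bound with sums and products exchanged gives $d^\times(A)\lesssim1$. For your suggested $D\approx|A|^{1/2}$ the loop stops with $X=\varnothing$. By the computation above, no $X$ with $|X|\ge n/2$ has $d^+(X)\lesssim|A|/D$.

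The remedy is to drop the threshold:
\begin{enumerate}
\item Apply the cubic lemma to $Y_i$ with its own value $d^\times(Y_i)$, and set $Y_{i+1}:=Y_i\setminus X_i$.
\item Stop at the first $k$ with $|Y_k|<|A|/2$, and put $Y:=Y_{k-1}$, $X:=X_0\cup\dots\cup X_{k-1}$. Then $X\cup Y=A$ and $|X|,|Y|\ge|A|/2$ hold automatically, with no padding.
\item Because $Y\subset Y_i$ with $|Y_i|\le2|Y|$, monotonicity in the correct direction gives $d^\times(Y)\le2d^\times(Y_i)$ for all $i<k$.
\end{enumerate}
Hence $\E_3^+(X,Z)^{1/3}\lesssim|Z|^{2/3}|X|\,(d^\times(Y)|A|)^{-1/3}$, that is, $d^+(X)\lesssim|X|^2/(d^\times(Y)|A|)\le|A|/d^\times(Y)$. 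This is \eqref{eq:Shakan}, and no balancing of $D$ is needed.
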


\begin{proof}[Source]
This is the restriction of \cite[Theorem~1.10]{Shakan2019} to
$A\subset\Rtimes$.
\end{proof}

\section{Multiplicative control}
\label{sec:multiplicative-control}

\begin{lemma}
\label{lem:multiplicative-control}
If $B\subset\Rpos$ is nonempty, then
\begin{equation}
  d^\times(B)|BB|^{12}|B+B|^{16}
  \gtrsim |B|^{38}.
\label{eq:multiplicative-control}
\end{equation}
\end{lemma}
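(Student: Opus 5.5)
The plan is to deduce the lemma from the dichotomy of Proposition~\ref{prop:RS}. Write $m=|B|$, $|B+B|=Km$, $|BB|=Mm$. Then \eqref{eq:multiplicative-control} is equivalent to
\[
  d^\times(B)\,K^{16}M^{12}\gtrsim m^{10}.
\]
In the first alternative \eqref{eq:RS-easy-case} this is immediate, because $d^\times(B)\ge1$ by \eqref{eq:d-trivial}. So the whole proof lives in the second alternative. There we are given a dyadic layer $\mathcal L$ at multiplicity $\tau$ carrying $e=\E^\times(B)$, and, for $\lambda\in\mathcal L'$, the lower bound \eqref{eq:RS} for $|BB_\lambda|$. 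The task is to play this lower bound against upper bounds in which $d^\times(B)$ enters through third moments.

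The first step is to record the information that $d^\times$ gives about the layer. Taking $Z=B$ in \eqref{eq:def-dtimes} gives $\E_3^\times(B)\le d^\times(B)m^3$, and hence
\[
  |\mathcal L|\tau^3\le d^\times(B)m^3 .
\]
We also have $|\mathcal L|\tau\le m^2$, the energy relation $|\mathcal L|\tau^2\approx e\ge m^3/M$ from \eqref{eq:energy-lower} and \eqref{eq:RS-energy-layer}, and $|B_\lambda|=r_{B/B}(\lambda)\in[\tau,2\tau)$. By \eqref{eq:interp-3half-3}-type interpolation one also gets $e\le (d^\times(B))^{1/2}m^{5/2}$.

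The second step is to bound $|BB_\lambda|$ from above. Two trivial bounds are available:
\[
  |BB_\lambda|\le|BB|=Mm,\qquad |BB_\lambda|\le m\tau .
\]
A subtler bound comes from $B_\lambda\subset B\cap\lambda^{-1}B$, which gives $BB_\lambda\subset BB\cap\lambda^{-1}BB$. Summing over $\lambda\in\mathcal L'$ and using $|\mathcal L'|\gg|\mathcal L|$ then yields
\[
  |\mathcal L|^{1/2}\frac{m^6}{M^4K^8}\lesssim\sum_{\lambda\in\mathcal L'}r_{BB/BB}(\lambda).
\]
I would then try to control the right-hand side by Hölder against $\E_3^\times(B,Z)$ with a suitable $Z$ (for instance $Z=BB$ or $Z=B_\lambda$), so that $d^\times(B)$ appears with exponent one.

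Squaring \eqref{eq:RS} shows that it suffices to prove
\[
  |\mathcal L|\Bigl(\frac{|BB_\lambda|}{m}\Bigr)^2\lesssim d^\times(B)\,M^4
\]
for some $\lambda\in\mathcal L'$. The natural attempt is to interpolate between the two trivial bounds, writing $(|BB_\lambda|/m)^2\le M^{a}\tau^{2-a}$. One then uses $|\mathcal L|\tau^3\le d^\times(B)m^3$ together with $|\mathcal L|\tau^2\gtrsim m^3/M$ to trade $|\mathcal L|$ and powers of $\tau$ for $d^\times(B)$ and powers of $M$. If the exponents do not close, I would switch to the summed form above.

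The main obstacle is this upper-bound step. The trivial bounds alone produce stray powers of $m$ (for instance $m^{15}\lesssim d^\times(B)M^{16}K^{32}$, which is weaker than needed), so the exponent bookkeeping there is the crux. I expect the correct move is to apply $d^\times(B)$ with $Z=B_\lambda$, or with the union of the $B_\lambda$, inside a Cauchy--Schwarz bound for $\sum_\lambda r_{BB/BB}(\lambda)$. That way the factor $|\mathcal L|^{1/2}$ in \eqref{eq:RS} is absorbed exactly, and Solymosi's bound \eqref{eq:Solymosi} can be used to clean up any leftover powers of $K$ and $M$.
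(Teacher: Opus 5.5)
Your reduction is sound as far as it goes. The first alternative of Proposition~\ref{prop:RS} is immediate. The containment $BB_\lambda\subset BB\cap\lambda^{-1}BB$ (so $r_{BB/BB}(\lambda)\ge|BB_\lambda|$) is exactly the paper's first step. Your single-$\lambda$ sufficient condition $|\mathcal L|(|BB_\lambda|/m)^2\lesssim d^\times(B)M^4$ is also correct.

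However, you never supply the upper bound that carries the whole proof, and the routes you sketch cannot supply it. The trivial bounds do not close even in principle. Take the magnitudes $M=m^{2/5}$, $K=m^{3/10}$, $d^\times(B)=m^{1/5}$, $\tau=m^{3/5}$, $|\mathcal L|=m^{7/5}$, $e=m^{13/5}$, $|BB_\lambda|=m^{13/10}$. They satisfy every relation you list: $|\mathcal L|\tau^3\le d^\times(B)m^3$, $|\mathcal L|\tau\le m^2$, $m^3/M\le e\le d^\times(B)^{1/2}m^{5/2}$, both trivial bounds on $|BB_\lambda|$, \eqref{eq:RS} and \eqref{eq:Solymosi}. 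Yet $d^\times(B)K^{16}M^{12}=m^{49/5}<m^{10}$.

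Taking $Z=B_\lambda$ points the wrong way, since upper bounds on $\E^\times(B,B_\lambda)$ only give \emph{lower} bounds on $|BB_\lambda|$. The unweighted sum $\sum_{\lambda\in\mathcal L'}r_{BB/BB}(\lambda)$ involves only $BB$, so $d^\times(B)$ has no way to enter it.

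The missing idea is to weight each $\lambda\in\mathcal L'$ by $r_{B/B}(\lambda)\ge\tau$ and recognise a mixed energy. With $\Pi=BB$, grouping quadruples by their common quotient gives $\E^\times(B,\Pi)=\sum_\lambda r_{B/B}(\lambda)r_{\Pi/\Pi}(\lambda)$. Hence
\[
  \E^\times(B,\Pi)\ge\tau\sum_{\lambda\in\mathcal L'}|BB_\lambda|
  \gtrsim|\mathcal L|^{1/2}\tau\,\frac{m^6}{M^4K^8}
  \gtrsim e^{1/2}\,\frac{m^6}{M^4K^8}
  \ge\frac{m^{15/2}}{M^{9/2}K^8},
\]
by \eqref{eq:RS}, $|\mathcal L'|\gg|\mathcal L|$, \eqref{eq:RS-energy-layer} and $e\ge m^3/M$. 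So $|\mathcal L|^{1/2}$ is absorbed by pairing it with $\tau$ to form $e^{1/2}$, not by a Cauchy--Schwarz over $\lambda$.

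Of your candidates, $Z=BB$ is the right one, but only for this weighted quantity. Cauchy--Schwarz and \eqref{eq:def-dtimes} with $Z=\Pi$ give $\E^\times(B,\Pi)^2\le\E_1^\times(B,\Pi)\,\E_3^\times(B,\Pi)\le d^\times(B)m^5M^3$. Comparing the two bounds yields $m^{10}\lesssim d^\times(B)M^{12}K^{16}$ directly, with no further use of Solymosi. Averaging the same inequality over $\mathcal L'$ is also what would prove your single-$\lambda$ condition.
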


\begin{proof}
Set
\[
  m:=|B|,
  \qquad
  p:=|BB|,
  \qquad
  s:=|B+B|,
  \qquad
  \Pi:=BB,
  \qquad
  e:=\E^\times(B),
\]
and write $p=Mm$ and $s=Km$.

Apply Proposition~\ref{prop:RS}. If \eqref{eq:RS-easy-case} holds, then, since
$d^\times(B)\ge1$,
\[
  d^\times(B)p^{12}s^{16}
  =d^\times(B)m^{28}M^{12}K^{16}
  \gtrsim m^{38},
\]
and the conclusion follows immediately.

We may therefore assume that the second alternative of Proposition~\ref{prop:RS}
holds. For $\lambda\in\mathcal L'$, if $t\in BB_\lambda$, then $t\in\Pi$ and
$\lambda t\in\Pi$. Hence
\begin{equation}
  r_{\Pi/\Pi}(\lambda)
  \ge |BB_\lambda|.
\label{eq:quotient-lower}
\end{equation}
On the other hand, grouping the mixed multiplicative energy according to the
common quotient gives
\begin{equation}
  \E^\times(B,\Pi)
  =
  \sum_\lambda
  r_{B/B}(\lambda)r_{\Pi/\Pi}(\lambda).
\label{eq:mixed-energy-identity}
\end{equation}
Using \eqref{eq:RS}, \eqref{eq:quotient-lower},
\eqref{eq:mixed-energy-identity}, and \eqref{eq:RS-energy-layer}, we obtain
\begin{align}
  \E^\times(B,\Pi)
  &\gtrsim
  |\mathcal L|\tau
  \frac{m^{18}}{p^4s^8|\mathcal L|^{1/2}}
  \\
  &\gtrsim
  \frac{m^{18}e^{1/2}}{p^4s^8}.
\label{eq:mixed-lower-1}
\end{align}
By \eqref{eq:energy-lower}, $e\ge m^4/p$, and therefore
\begin{equation}
  \E^\times(B,\Pi)
  \gtrsim
  \frac{m^{20}}{p^{9/2}s^8}.
\label{eq:mixed-lower-2}
\end{equation}

Let $f(t):=r_{B/\Pi}(t)$. By Cauchy--Schwarz and the definition of
$d^\times(B)$,
\begin{align}
  \E^\times(B,\Pi)^2
  &=
  \left(\sum_t f(t)^2\right)^2
  \\
  &\le
  \left(\sum_t f(t)\right)
  \left(\sum_t f(t)^3\right)
  \\
  &\le
  (mp)\,d^\times(B)mp^2
  =d^\times(B)m^2p^3.
\label{eq:mixed-upper}
\end{align}
Combining \eqref{eq:mixed-lower-2} and \eqref{eq:mixed-upper} gives
\[
  \frac{m^{40}}{p^9s^{16}}
  \lesssim
  d^\times(B)m^2p^3,
\]
which is equivalent to \eqref{eq:multiplicative-control}.
\end{proof}

\section{Additive control}
\label{sec:additive-control}

\begin{lemma}
\label{lem:additive-control}
For every nonempty finite set $U\subset\R$,
\begin{equation}
  d^+(U)^{17}|U+U|^{29}
  \gtrsim |U|^{46}.
\label{eq:additive-control}
\end{equation}
\end{lemma}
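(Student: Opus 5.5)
The plan is to feed Proposition~\ref{prop:Cushman} into an upper bound for $F=\E_{12/7}^+(B)$, compare it with a lower bound for $F$ from $\E^+(B)\ge b^4/|B+B|$ via the interpolation \eqref{eq:interp-2}, and then eliminate $F$. If $N=|U|<N_0$ the claim is trivial, since $d^+(U)\ge1$ and $|U+U|\ge|U|$. Otherwise, take $B\subset U$ as in Proposition~\ref{prop:Cushman} and write $b=|B|\ge N/2$, $s=|B+B|\le|U+U|$ and $d=d^+(U)$. Since $r_{B-B}\le r_{U-B}$ pointwise, the definition \eqref{eq:def-dplus} with $Z=B$ gives
\[
\E_3^+(B)\le \E_3^+(U,B)\le d\,|U|\,b^2\ll d\,b^3 .
\]
So every occurrence of $\E_3^+(B)$ can be replaced by $d\,b^3$, and all powers of $N$ may be traded for powers of $b$.

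\emph{Lower bound.} Apply \eqref{eq:interp-2} to $a_t=r_{B-B}(t)$ and use \eqref{eq:energy-lower}:
\[
\frac{b^4}{s}\le \E^+(B)\le F^{7/9}\,\E_3^+(B)^{2/9}.
\]
Hence $F\gtrsim b^{36/7}s^{-9/7}(d b^3)^{-2/7}=b^{30/7}s^{-9/7}d^{-2/7}$.

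\emph{Upper bound.} By \eqref{eq:regularisation}, $F\lesssim\Delta^{12/7}|P_\Delta|$. I will interpolate between the trivial bound $\Delta|P_\Delta|\le|R|^2\le b^2$ and the bound on $\Delta^2|P_\Delta|$ supplied by \eqref{eq:projection}:
\[
\Delta^{12/7}|P_\Delta|=(\Delta^2|P_\Delta|)^{5/7}(\Delta|P_\Delta|)^{2/7}.
\]
Everything then hinges on estimating $\Sigma:=\sum_{p\in P_\Delta}r_{\mathcal S-\mathcal S}(p)$.

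The crude estimate $r_{\mathcal S-\mathcal S}\le|\mathcal S|\le s$ gives $\Delta^2|P_\Delta|\ll d\,b\,s$. This only yields $d\,s^2\gtrsim b^3$, which is too weak. The point of the popular-sum set \eqref{eq:popular-sums} is that each $y\in\mathcal S$ has at least $b^2/(8s\log N)$ representations. Hence any count over $\mathcal S$ can be lifted to a count over tuples in $B$, at a cost of $s/b^2$ per element of $\mathcal S$. I would therefore bound $\Sigma$ in two steps:
\[
\Sigma\le|P_\Delta|^{1/2}\,\E^+(\mathcal S)^{1/2}
\quad\text{and}\quad
\E^+(\mathcal S)\lesssim\Bigl(\frac{s}{b^2}\Bigr)^{4}\,\#\{b_1+b_2-b_3-b_4=b_5+b_6-b_7-b_8\}.
\]
The first is Cauchy--Schwarz. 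For the second, I would write the eightfold count as a second moment of $r_{B+B-B-B}$ and bound it using $\E^+(B)$, $\E_3^+(B)\ll d b^3$ and the $3/2$-moment interpolations \eqref{eq:interp-3half-3}--\eqref{eq:interp-3half-12seven}. Alternatively, one can use the rich set $R$ from \eqref{eq:rich-set}: each $x\in R$ has $(B+x)\cap\mathcal S$ of size at least $\tfrac34 b$, which gives many realisations of each $p\in P_\Delta$ as $(x+b)-(x'+b')$, and the $d^+$-definition with $Z$ a suitable translate family can then be applied. The resulting bound on $\Delta^2|P_\Delta|$ is substituted into the interpolation above and compared with the lower bound for $F$. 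The exponents should then be optimised; the choice of the $12/7$-moment in Proposition~\ref{prop:Cushman} is presumably what makes the outcome come out as $d^{17}s^{29}\gtrsim b^{46}$.

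The main obstacle is this estimate for $\Sigma$. The trivial bound loses exactly a factor of about $s/b$ too much, so the popularity of $\mathcal S$ must be exploited efficiently. Whichever route is used, I expect it to produce a bound of the form $\Sigma\lesssim |P_\Delta|^{\alpha}\,d^{\beta}s^{\gamma}b^{\delta}$. The final exponent bookkeeping then reduces to a linear computation in the logarithms of $d$, $s$ and $b$.
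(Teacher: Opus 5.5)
Your outer steps are correct and match the paper's. These are the bound $\E_3^+(B)\le\E_3^+(U,B)\le d|U|b^2\ll db^3$ and the lower bound $F\gtrsim b^{30/7}s^{-9/7}d^{-2/7}$ from \eqref{eq:interp-2} and \eqref{eq:energy-lower}, which is the paper's last step read backwards. The gap is the middle. You never produce the upper bound for $\Sigma=\sum_{p\in P_\Delta}r_{\mathcal S-\mathcal S}(p)$, which is the real content of the lemma, and neither route you sketch delivers it.

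In the first route, Cauchy--Schwarz discards the interaction between $P_\Delta$ and $B$, and lifting all four elements of $\mathcal S$ loses too much. Write $T_8$ for your eightfold count. The natural bound $T_8\le b^4\E^+(B)$, from $\|r_{B-B}*r_{B-B}\|_\infty\le\E^+(B)$ and $\|r_{B-B}*r_{B-B}\|_1=b^4$, gives $\E^+(\mathcal S)\lesssim s^4\E^+(B)/b^4$. Since $\E^+(B)\ge b^4/s$, this is never better than the trivial $\E^+(\mathcal S)\le|\mathcal S|^3\le s^3$, and with that your scheme gives nothing beyond the crude $ds^2\gtrsim b^3$ you already rejected.

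The second route goes in the wrong direction. The rich set $R$ produces many representations of each $p\in P_\Delta$ in $\mathcal S-\mathcal S$. That is the mechanism behind the lower bound \eqref{eq:projection}, not an upper bound for $\Sigma$.

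The missing idea is the paper's one-lift H\"older argument:
\begin{itemize}
\item Lift $\mathcal S$ to $B+B$ once, getting $\Sigma\lesssim\frac{s}{b^2}\sum_x r_{B-P_\Delta}(x)r_{\mathcal S-B}(x)$, and apply H\"older with exponents $3/2$ and $3$.
\item For the first factor, use $d^+$ with $Z=P_\Delta$ and \eqref{eq:interp-3half-3}. This gives $\E_{3/2}^+(B,P_\Delta)^{2/3}\lesssim d^{1/6}b^{2/3}|P_\Delta|^{5/6}$.
\item For $\E_3^+(\mathcal S,B)$, lift $\mathcal S$ pointwise a second time and split $r_{B-B}$ into dyadic level sets $D_i$. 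Apply $d^+$ with $Z=D_i$ and Minkowski, then bound $\sum_i 2^i|D_i|^{2/3}\lesssim\E_{3/2}^+(B)^{2/3}\le b^{2/5}F^{7/15}$.
\end{itemize}
Combining these with \eqref{eq:projection} gives $\Delta^2|P_\Delta|^{7/6}\lesssim d^{3/2}s^2b^{-8/5}F^{7/15}$.

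Two features of this are essential and absent from your plan.
\begin{itemize}
\item The power $|P_\Delta|^{7/6}$ matches $F^{7/6}\lesssim\Delta^2|P_\Delta|^{7/6}$ exactly, so no trivial bound such as $\Delta|P_\Delta|\le b^2$ is needed. If you feed even this $\Sigma$-bound into your interpolation $(\Delta^2|P_\Delta|)^{5/7}(\Delta|P_\Delta|)^{2/7}$, you get only $s\gtrsim b^{3/2}$ when $d\approx1$.
\item The factor $F^{7/15}$ makes the estimate self-improving and yields $F\lesssim d^{15/7}s^{20/7}b^{-16/7}$, hence $d^{17}s^{29}\gtrsim b^{46}$ via \eqref{eq:interp-2}. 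If you instead bound $\E_{3/2}^+(B)$ through $\E_1^+$ and $\E_3^+$ alone, you get a $\Sigma$-bound of your expected shape $|P_\Delta|^\alpha d^\beta s^\gamma b^\delta$. Run through the same steps, it gives only $d^4s^7\gtrsim b^{11}$.
\end{itemize}
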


\begin{proof}
Set
\[
  N:=|U|,
  \qquad
  Q:=|U+U|,
  \qquad
  D:=d^+(U).
\]
If $N$ is below the absolute threshold in Proposition~\ref{prop:Cushman}, then
the conclusion can be absorbed into the implicit constant. Hence assume that $N$
is sufficiently large and choose $B,\mathcal S,R,\Delta,P_\Delta$ as in
Proposition~\ref{prop:Cushman}. Put
\[
  b:=|B|,
  \qquad
  F:=\E_{12/7}^+(B).
\]
Then $b\asymp N$, $|B+B|\le Q$, and
\begin{equation}
  F\lesssim \Delta^{12/7}|P_\Delta|,
  \qquad
  \Delta^2|P_\Delta|^2b^2
  \ll
  \E_3^+(B)\Sigma,
\label{eq:reg-and-projection}
\end{equation}
where
\[
  \Sigma
  :=
  \sum_{p\in P_\Delta}r_{\mathcal S-\mathcal S}(p).
\]

We first estimate $\Sigma$. By the convolution identity
\[
  \Sigma
  =
  \sum_{x\in\mathcal S}r_{\mathcal S+P_\Delta}(x)
\]
and the definition of $\mathcal S$,
\begin{equation}
  \Sigma
  \lesssim
  \frac{Q}{b^2}
  \sum_x
  r_{B+B}(x)r_{\mathcal S+P_\Delta}(x).
\label{eq:sigma-popular}
\end{equation}
The identity
\begin{equation}
  \sum_x r_{B+B}(x)r_{\mathcal S+P_\Delta}(x)
  =
  \sum_x r_{B-P_\Delta}(x)r_{\mathcal S-B}(x)
\label{eq:convolution-identity}
\end{equation}
holds because both sides count quadruples $(b_1,b_2,s,p)$ satisfying
$b_1+b_2=s+p$. Hence H\"older's inequality gives
\begin{equation}
  \Sigma
  \lesssim
  \frac{Q}{b^2}
  \E_{3/2}^+(B,P_\Delta)^{2/3}
  \E_3^+(\mathcal S,B)^{1/3}.
\label{eq:sigma-holder}
\end{equation}

Since $B\subset U$,
\[
  \E_3^+(B,P_\Delta)
  \le
  \E_3^+(U,P_\Delta)
  \le
  DN|P_\Delta|^2.
\]
Also $\E_1^+(B,P_\Delta)=b|P_\Delta|$. Applying
\eqref{eq:interp-3half-3} to the sequence $r_{B-P_\Delta}$ and using
$b\asymp N$, we obtain
\begin{equation}
  \E_{3/2}^+(B,P_\Delta)^{2/3}
  \lesssim
  D^{1/6}N^{2/3}|P_\Delta|^{5/6}.
\label{eq:first-mixed-energy}
\end{equation}

We next estimate the second factor in \eqref{eq:sigma-holder}. By
\eqref{eq:popular-sums}, for every $x\in\R$,
\begin{align}
  r_{\mathcal S-B}(x)
  &\lesssim
  \frac{Q}{b^2}
  \sum_s r_{B+B}(s)\ind_B(s-x)
  \\
  &=
  \frac{Q}{b^2}
  \sum_t r_{B-B}(t)\ind_B(x+t).
\label{eq:triple-convolution}
\end{align}
The second line follows after the change of variables $t\mapsto -t$ and the
identity $r_{B-B}(t)=r_{B-B}(-t)$; equivalently, both expressions count the same
triples.

For every dyadic index $0\le i\le\lfloor\log_2 b\rfloor$ for which the
corresponding layer is nonempty, define
\[
  D_i
  :=
  \{t\in B-B:2^i\le r_{B-B}(t)<2^{i+1}\}.
\]
By Minkowski's inequality, then by $B\subset U$ and the definition of $D$,
\begin{align}
  \E_3^+(\mathcal S,B)^{1/3}
  &\lesssim
  \frac{Q}{b^2}
  \sum_i 2^i\E_3^+(B,D_i)^{1/3}
  \\
  &\le
  \frac{Q}{b^2}(DN)^{1/3}
  \sum_i2^i|D_i|^{2/3}.
\label{eq:second-mixed-pre}
\end{align}
For each $i$, put
\[
  A_i
  :=
  \sum_{t\in D_i}r_{B-B}(t)^{3/2}.
\]
Then $2^i|D_i|^{2/3}\le A_i^{2/3}$. Since there are at most $O(\log N)$
nonempty dyadic layers, H\"older's inequality yields
\begin{equation}
  \sum_i2^i|D_i|^{2/3}
  \lesssim
  \left(\sum_iA_i\right)^{2/3}
  =
  \E_{3/2}^+(B)^{2/3}.
\label{eq:dyadic-E32}
\end{equation}
Applying \eqref{eq:interp-3half-12seven} to $r_{B-B}$ and using
$\E_1^+(B)=b^2$, we get
\begin{equation}
  \E_{3/2}^+(B)^{2/3}
  \le
  b^{2/5}F^{7/15}.
\label{eq:E32-F}
\end{equation}
Substituting \eqref{eq:dyadic-E32}--\eqref{eq:E32-F} into
\eqref{eq:second-mixed-pre} and using $b\asymp N$ gives
\begin{equation}
  \E_3^+(\mathcal S,B)^{1/3}
  \lesssim
  D^{1/3}QN^{-19/15}F^{7/15}.
\label{eq:second-mixed-energy}
\end{equation}

Again, since $B\subset U$,
\begin{equation}
  \E_3^+(B)
  \le
  \E_3^+(U,B)
  \le
  DNb^2
  \lesssim DN^3.
\label{eq:E3B}
\end{equation}
Combining \eqref{eq:sigma-holder}, \eqref{eq:first-mixed-energy},
\eqref{eq:second-mixed-energy}, and \eqref{eq:E3B} with the second inequality in
\eqref{eq:reg-and-projection}, we obtain
\begin{equation}
  \Delta^2|P_\Delta|^{7/6}
  \lesssim
  D^{3/2}Q^2N^{-8/5}F^{7/15}.
\label{eq:key-delta}
\end{equation}
Indeed, after dividing by the factor $b^2\asymp N^2$ on the left, the total
exponent of $N$ is
\[
  3+\frac23-\frac{19}{15}-2-2
  =-\frac85.
\]

The first inequality in \eqref{eq:reg-and-projection} gives
\[
  F^{7/6}
  \lesssim
  \Delta^2|P_\Delta|^{7/6}.
\]
Together with \eqref{eq:key-delta}, this implies
\[
  F^{7/10}
  \lesssim
  D^{3/2}Q^2N^{-8/5},
\]
and hence
\begin{equation}
  F
  \lesssim
  D^{15/7}Q^{20/7}N^{-16/7}.
\label{eq:F-bound}
\end{equation}

Finally, applying \eqref{eq:interp-2} to $r_{B-B}$ and using
\eqref{eq:E3B} and \eqref{eq:F-bound}, we obtain
\begin{align}
  \E^+(B)
  &\le
  F^{7/9}\E_3^+(B)^{2/9}
  \\
  &\lesssim
  D^{17/9}Q^{20/9}N^{-10/9}.
\label{eq:E2-upper}
\end{align}
On the other hand, by \eqref{eq:energy-lower}, $b\asymp N$, and
$|B+B|\le Q$,
\begin{equation}
  \E^+(B)
  \gtrsim
  \frac{N^4}{Q}.
\label{eq:E2-lower}
\end{equation}
Comparing \eqref{eq:E2-upper} and \eqref{eq:E2-lower} gives
\[
  N^{46/9}
  \lesssim
  D^{17/9}Q^{29/9},
\]
which is equivalent to \eqref{eq:additive-control}.
\end{proof}

\section{The sum--product estimate}
\label{sec:main-theorem}

\begin{theorem}
\label{thm:main}
For every nonempty finite set $A\subset\R$,
\begin{equation}
  |AA|^{204}|A+A|^{301}
  \gtrsim |A|^{675}.
\label{eq:main-mixed}
\end{equation}
Consequently, for every $\varepsilon>0$,
\begin{equation}
  \max\{|A+A|,|AA|\}
  \gg_{\varepsilon}
  |A|^{135/101-\varepsilon}.
\label{eq:main-epsilon}
\end{equation}
\end{theorem}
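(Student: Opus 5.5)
The plan is to combine Lemma~\ref{lem:multiplicative-control} and Lemma~\ref{lem:additive-control} through Shakan's decomposition (Theorem~\ref{thm:Shakan}), after reducing to a set of positive reals. Write $n:=|A|$, $P:=|AA|$, $S:=|A+A|$. Small $n$ is absorbed into the implicit constants, so assume $n$ is large.

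\textbf{Reduction to positive reals.} Remove $0$ if present. At least $(n-1)/2$ elements of $A$ have the same sign. If these elements are negative, replace them by their negatives. This gives a set $A_0\subset\Rpos$ with $|A_0|\gg n$. Moreover $|A_0+A_0|\le S$, because negating a subset of $A$ maps its sumset into $-(A+A)$. Also $|A_0A_0|\le P$, because $(-a)(-b)=ab$.

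\textbf{Decomposition.} Apply Theorem~\ref{thm:Shakan} to $A_0$. This yields $X,Y\subset A_0\subset\Rpos$ with $|X|,|Y|\gg n$ and $d^+(X)d^\times(Y)\lesssim n$. Since $X,Y\subset A_0$, their sumsets have size at most $S$ and their product sets have size at most $P$.

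\textbf{Applying the two lemmas.} Lemma~\ref{lem:multiplicative-control} applies to $Y$ because $Y\subset\Rpos$. It gives
\[
  d^\times(Y)\gtrsim \frac{|Y|^{38}}{|YY|^{12}|Y+Y|^{16}}\gg\frac{n^{38}}{P^{12}S^{16}}.
\]
Lemma~\ref{lem:additive-control} applied to $U=X$ gives
\[
  d^+(X)^{17}\gtrsim \frac{n^{46}}{S^{29}}.
\]
Raising the multiplicative bound to the power $17$ and multiplying by the additive bound, then using $d^+(X)d^\times(Y)\lesssim n$, we get
\[
  n^{17}\gtrsim \bigl(d^+(X)d^\times(Y)\bigr)^{17}\gtrsim\frac{n^{46}\,n^{646}}{S^{29}\,P^{204}S^{272}}.
\]
Rearranging gives $P^{204}S^{301}\gtrsim n^{692-17}=n^{675}$, which is \eqref{eq:main-mixed}.

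\textbf{The consequence.} Bound both $P$ and $S$ by $\max\{P,S\}$. This gives $\max\{P,S\}^{505}\gtrsim n^{675}$. Since $675/505=135/101$, the polylogarithmic loss is absorbed into $n^{-\varepsilon}$, which yields \eqref{eq:main-epsilon}.

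\textbf{Main obstacle.} The only delicate point is bookkeeping in the reduction. One must make sure that all sets are nonempty subsets of $\Rpos$, so that $d^\times$ and the lemmas apply. One must also check that the constant-factor losses in $|X|,|Y|\ge|A_0|/2$ and $|A_0|\gg n$ only cost absolute constants, which are swallowed by $\gtrsim$. The exponent arithmetic itself is routine.
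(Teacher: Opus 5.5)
Your proposal is correct and follows the paper's proof essentially step for step: you reduce to a subset of $\Rpos$ of size $\gg |A|$, split it via Theorem~\ref{thm:Shakan}, apply Lemma~\ref{lem:additive-control} to $X$ and the seventeenth power of Lemma~\ref{lem:multiplicative-control} to $Y$, and cancel $(d^+(X)d^\times(Y))^{17}\lesssim n^{17}$ to obtain the exponent $692-17=675$. The only cosmetic difference is ordering: you do the sign reduction first, whereas the paper proves the case $A\subset\Rpos$ first and reduces the general case to it afterwards.
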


\begin{proof}
First suppose that $A\subset\Rpos$. Set
\[
  n:=|A|,
  \qquad
  P:=|AA|,
  \qquad
  S:=|A+A|.
\]
By Theorem~\ref{thm:Shakan}, there exist $X,Y\subset A$ such that
\[
  |X|,|Y|\ge\frac n2,
  \qquad
  d^+(X)d^\times(Y)\lesssim n.
\]
Applying Lemma~\ref{lem:additive-control} to $X$ and using
$X+X\subset A+A$, we get
\begin{equation}
  d^+(X)^{17}S^{29}
  \gtrsim n^{46}.
\label{eq:add-X}
\end{equation}
Applying Lemma~\ref{lem:multiplicative-control} to $Y$ and using
$YY\subset AA$ and $Y+Y\subset A+A$, we get
\begin{equation}
  d^\times(Y)P^{12}S^{16}
  \gtrsim n^{38}.
\label{eq:mult-Y}
\end{equation}
Raising \eqref{eq:mult-Y} to the seventeenth power and multiplying by
\eqref{eq:add-X} yields
\begin{equation}
  \bigl(d^+(X)d^\times(Y)\bigr)^{17}
  P^{204}S^{301}
  \gtrsim n^{692}.
\label{eq:before-Shakan}
\end{equation}
Since $d^+(X)d^\times(Y)\lesssim n$, it follows that
\begin{equation}
  P^{204}S^{301}
  \gtrsim n^{675}.
\label{eq:positive-main}
\end{equation}

Now let $A\subset\R$ be an arbitrary nonempty finite set. The case $|A|=1$ is
immediate, so assume $|A|\ge2$. Let $A^*:=A\setminus\{0\}$ and choose the larger
of $A^*\cap\Rpos$ and $A^*\cap\R_{<0}$. If the negative part is chosen,
multiply it by $-1$. In this way we obtain a set $A'\subset\Rpos$ satisfying
\[
  |A'|\ge\frac{|A|}{4},
  \qquad
  |A'+A'|\le|A+A|,
  \qquad
  |A'A'|\le|AA|.
\]
Applying \eqref{eq:positive-main} to $A'$ proves \eqref{eq:main-mixed}.

Finally, let
\[
  M_0:=\max\{|A+A|,|AA|\}.
\]
By \eqref{eq:main-mixed},
\[
  M_0^{505}\gtrsim |A|^{675},
\]
so
\[
  M_0\gtrsim |A|^{675/505}
  =|A|^{135/101}.
\]
Since $\gtrsim$ suppresses only a fixed power of a logarithm, and since for every
fixed $C$ and every $\varepsilon>0$ one has
$(\log(2+|A|))^C\ll_{C,\varepsilon}|A|^\varepsilon$, this gives
\eqref{eq:main-epsilon}.
\end{proof}

\begin{remark}
\label{rem:exponent-bookkeeping}
The final exponents are completely determined by the two control lemmas and
Shakan's decomposition:
\[
  204=17\cdot12,
  \qquad
  301=29+17\cdot16,
  \qquad
  675=46+17\cdot38-17.
\]
Thus, once Lemmas~\ref{lem:multiplicative-control} and
\ref{lem:additive-control} are established, there is no further freedom in the
exponent bookkeeping leading to Theorem~\ref{thm:main}.
\end{remark}

\section*{Acknowledgments}

The authors thank Peng Yang for helpful discussions and valuable comments, and Ilya Shkredov for insightful historical remarks on the development of higher-energy methods and the evolution of sum--product estimates beyond Solymosi's bound.

\end{document}